\title{Non L-space integral homology 3-spheres with no nice orderings}
\author{Xinghua Gao}
\email{xgao29@illinois.edu}
\address{Department of Mathematics\\
1409 W. Green St.\\\newline
Urbana, IL 61801\\
United States}
\newtheorem{theorem}{Theorem}
\newtheorem{lemma}{Lemma}
\newtheorem*{remark}{Remark}
\numberwithin{equation}{section}
\begin{document}

\begin{abstract}
This paper gives infinitely many examples of non L-space irreducible integer homology 3-spheres whose fundamental groups do not have nontrivial $\widetilde{PSL_2(\mathbb{R})}$ representations.
\end{abstract}

\maketitle

\section{Introduction}

Before stating the main result, I will review some definitions.
A rational homology 3-sphere $Y$ is called an \emph{L-space} if $\text{rk} \widehat{HF}(Y)=|H_1(Y; \mathbb{Z})|$, i.e. its Heegaard Floer homology is minimal. An L-space does not admit any co-orientable taut foliation by Bowden \cite{1509.07709}, Kazez-Roberts \cite{1404.5919} and Ozsv\'{a}th-Szab\'{o} \cite{taut}.
A nontrivial group $G$ is called left-orderable if there exists a strict total ordering of $G$ invariant under left multiplication.
Boyer, Gordon, and Watson conjectured in \cite{Boyer} that an irreducible rational homology 3-sphere is a non L-space if and only if its fundamental group is left-orderable.
A stronger conjecture states that for an irreducible $\mathbb{Q}$-homology 3-sphere, being a non L-space, having left-orderable fundamental group and admitting a co-orientable taut foliation are the same (see e.g. Culler-Dunfield \cite{CD16}).

To show the fundamental group $\pi_1(Y)$ of a $3$-manifold $Y$ is orderable, it is most common to consider $\widetilde{PSL_2(\mathbb{R})}$ representations of $\pi_1(Y)$. In fact in many cases, $\widetilde{PSL_2(\mathbb{R})}$ representations are sufficient to define an order on $\pi_1(Y)$ \cite{CD16}.
However, Theorem \ref{result} in this paper shows that, even in the case of non L-space integral homology spheres, orders coming from $\widetilde{PSL_2(\mathbb{R})}$ are not enough to prove the conjecture of Boyer, Gordon and Watson.

It is conjectured that any integer homology $3$-sphere different from the $3$-sphere admits an irreducible representation in $SU_2(\mathbb{C})$ (see e.g. Kirby's problem list \cite[Problem 3.105]{Kirby}). Zentner showed that if one enlarges the target group to $SL_2(\mathbb{C})$, then every such integral homology 3 sphere has an irreducible representation \cite{Zentner}. In contrast, I will give examples where there are no irreducible $PSL_2(\mathbb{R})$ representations. Let $\mathcal{M}$ be the manifold $m137$ \cite{m137cencus} and $\mathcal{M}(1,n)$ be the integral homology sphere obtained by $(1,n)$ Dehn fillings on $\mathcal{M}$. %In contrast to the conjecture and Zentner's result, my main result of this paper states:
The main result of this paper states:
\begin{theorem}
\label{result}
For all $n\ll 0$, the manifold $\mathcal{M}(1,n)$ is a hyperbolic integral homology $3$-sphere where
\begin{itemize}
\item[a)] $\pi_1(\mathcal{M}(1,n))$ does not have a nontrivial $\widetilde{PSL_2(\mathbb{R})}$ representation.
\item[b)] $\mathcal{M}(1,n)$ is not an L-space.
\end{itemize}
\end{theorem}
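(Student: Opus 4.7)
The three ingredients --- hyperbolicity, the integer homology sphere condition, and parts (a),(b) --- split cleanly. Hyperbolicity follows from Thurston's hyperbolic Dehn surgery theorem, since $m137$ is a one-cusped hyperbolic $3$-manifold and all but finitely many Dehn fillings of such a manifold are hyperbolic. The integer homology sphere condition is an abelianization calculation: with respect to a peripheral basis $(\mu,\lambda)$ adapted to $H_1(\mathcal{M};\mathbb{Z})\cong\mathbb{Z}\langle\mu\rangle$, the slope $\mu+n\lambda$ kills the generator of $H_1(\mathcal{M})$, so $H_1(\mathcal{M}(1,n);\mathbb{Z})=0$ for every $n\in\mathbb{Z}$.

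For part (a), I would pull back to $\pi_1(\mathcal{M})$: a nontrivial $\widetilde{PSL_2(\mathbb{R})}$ representation of $\pi_1(\mathcal{M}(1,n))$ is the same as a nontrivial $\widetilde{PSL_2(\mathbb{R})}$ representation $\tilde\rho$ of $\pi_1(\mathcal{M})$ sending the filling slope $\mu\lambda^n$ to the identity. Because $\mu$ and $\lambda$ commute, the translation number is additive on their commuting lifts, so $\tilde\rho(\mu\lambda^n)=1$ is equivalent to two conditions: (i) $\rho(\mu\lambda^n)=1$ in $PSL_2(\mathbb{R})$, and (ii) $\tau(\tilde\rho(\mu))+n\,\tau(\tilde\rho(\lambda))=0$. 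The plan is to parametrize the $PSL_2(\mathbb{R})$ character variety of $\pi_1(\mathcal{M})$ from a SnapPy presentation, enumerate its components, and compute (or at least bound) the peripheral translation numbers $\tau(\mu),\tau(\lambda)$ on each. For $n\ll 0$ the linear relation in (ii) combined with the algebraic constraint in (i) should single out only the trivial representation.

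For part (b), since no $\widetilde{PSL_2(\mathbb{R})}$ representation is available, I need an independent certificate that $\mathcal{M}(1,n)$ is not an L-space. The most natural approach is via bordered Heegaard Floer: compute $\widehat{CFD}(\mathcal{M})$ with respect to the cusp, apply the Hanselman--Rasmussen--Watson-type L-space gluing criterion to determine the L-space filling interval of $\mathcal{M}$, and check that $(1,n)$ lies outside this interval for $n\ll 0$. An alternative is to produce an irreducible $PSL_2(\mathbb{R})$ representation of $\pi_1(\mathcal{M}(1,n))$ with nonzero Euler number --- such representations do not lift to $\widetilde{PSL_2(\mathbb{R})}$, so they are invisible to part (a), yet they fit into known L-space obstructions (via taut foliations or direct Heegaard Floer estimates).

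The main obstacle is clearly part (a): one needs enough explicit control over the $PSL_2(\mathbb{R})$ character variety of $m137$ to rule out, for every $n\ll 0$, a nontrivial representation whose peripheral translation numbers happen to satisfy the ratio $-\tau(\mu)/\tau(\lambda)=n$. Handling positive-dimensional components of the character variety, where $\tau(\mu)$ and $\tau(\lambda)$ vary continuously and can sweep through rational values, is the delicate step. Once that is under control the rest --- hyperbolicity, homology, and the non-L-space certificate --- is comparatively mechanical.
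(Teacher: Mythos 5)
Your hyperbolicity and homology observations are fine (Thurston's Dehn surgery theorem, and killing $\mu$ in $H_1(\mathcal{M})\cong\mathbb{Z}$), and your reduction of part (a) to the two conditions (i) $\rho(\mu\lambda^n)=1$ in $PSL_2(\mathbb{R})$ and (ii) $\tau(\tilde\rho(\mu))+n\,\tau(\tilde\rho(\lambda))=0$ is correct. But the plan then attacks the wrong half of the problem. The step you flag as delicate --- translation numbers sweeping through rational values on positive-dimensional components --- can only occur when the peripheral holonomy is elliptic, and ruling that out is precisely the paper's first key ingredient, which your proposal lacks: a trace reformulation of the $SU_2(\mathbb{C})$ triangle-inequality criterion (Lemma \ref{su2c}) showing that on the unique irreducible component $X_0(\mathcal{M})$, real characters with $|\mathrm{tr}\,\rho(\lambda)|<2$ are $SU_2(\mathbb{C})$ representations, so every $SL_2(\mathbb{R})$ character has $\rho(\lambda)$ hyperbolic. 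Once that is known, the lifted peripheral translation numbers are integers and locally constant, (ii) carries almost no content, and the entire burden falls on (i) --- which your plan never engages. Controlling (i) for all $n\ll 0$ simultaneously is the paper's second key ingredient: substituting $m=z^{-n}$ into the A-polynomial and proving by explicit sign and growth estimates that $F(z)=A(z^{4n'-1}-1)-Bz^{2n'-4}$ has no real roots in the admissible domain for $n'\gg 0$ (using the symmetry $F(1/z)=F(z)/z^{4n'+6}$ to reduce to $|z|<1$). Without an effective argument of this type, ``enumerate components and bound $\tau$'' does not yield a proof. (You also elide the lifting bookkeeping the paper records: on an integral homology sphere every $PSL_2(\mathbb{R})$ representation lifts to $SL_2(\mathbb{R})$, and central $\widetilde{PSL_2(\mathbb{R})}$-lifts of the trivial representation factor through $H_1=0$.)

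For part (b), your bordered Heegaard Floer route is plausible in principle but entirely unexecuted, and your fallback is not merely incomplete --- it cannot work: since $\mathcal{M}(1,n)$ is an integral homology sphere, $H^2(\mathcal{M}(1,n);\mathbb{Z})=0$, so the Euler number of \emph{every} $PSL_2(\mathbb{R})$ representation vanishes (the paper cites Ghys for exactly this), every such representation lifts to $\widetilde{PSL_2(\mathbb{R})}$, and in any case part (a) shows no nontrivial $PSL_2(\mathbb{R})$ representation exists at all for $n\ll 0$; a representation ``invisible to part (a)'' is therefore impossible, and even if one existed, an irreducible $PSL_2(\mathbb{R})$ representation is not a known L-space obstruction. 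The paper's actual argument is different and much more rigid: by Gillespie's theorem, $\mathcal{L}(Y)=\mathcal{S}l(Y)-\{l\}$ if and only if $Y$ has genus $0$ and admits some L-space filling. It identifies $\mathcal{M}(1,-3)$ with the $(1,-1)$ filling of the $8_{20}$ knot complement, whose Alexander polynomial $x^4-2x^3+3x^2-2x+1$ violates the Ozsv\'ath--Szab\'o $\pm 1$-coefficient criterion, so $\mathcal{M}$ has a non-longitudinal non-L-space filling; and $\mathcal{M}$ has genus $0$ because it is the complement of a knot in $S^2\times S^1$ meeting each sphere three times. The dichotomy then forces $\mathcal{L}(\mathcal{M})=\emptyset$, which is stronger than what you aim for: \emph{no} $(1,n)$ filling is an L-space, for every $n$, not just $n\ll 0$.
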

\noindent This means that we can not produce an order on $\pi_1(\mathcal{M}(1,n))$ simply by pulling back the action of $\widetilde{PSL_2(\mathbb{R})}$ on $\mathbb{R}$.

Section 1 is devoted to proving part (a) of Theorem \ref{result}. Let $X_0(\mathcal{M})$ be the component of the $SL_2(\mathbb{C})$ character variety of $\mathcal{M}$ containing the character of an irreducible representation (see Culler-Shalen \cite{shalen} for definition). Here is an outline of the approach. Let $X_{0,\mathbb{R}}(\mathcal{M})$ be the real points of $X_0(M)$. Define $[\rho]\in X_{0,\mathbb{R}}(\mathcal{M})$ and denote by $s$ the trace of $\rho(\lambda)$ where $\lambda$ is the homological longitude of $\mathcal{M}$. The proof is divided into two parts. In the first part, I show that points on the $|s|<2$ components of $X_{0,\mathbb{R}}(\mathcal{M})$ all correspond to $SU_2(\mathbb{C})$ representations while points on the $|s|>2$ components correspond to $SL_2(\mathbb{R})$ representations. In the second part, I show that $SL_2(\mathbb{R})$ representations of $\pi_1(\mathcal{M})$ give rise to no $SL_2(\mathbb{R})$ representations of $\pi_1(\mathcal{M}(1,n))$ when $n\ll 0$. This part of the proof is basically analysing real solutions to the A-polynomial of $\mathcal{M}$ under the relation $\mu\lambda^n=1$ given by $(1,n)$ Dehn filling , where $\mu$ is a choice of meridian of $\partial \mathcal{M}$.

In Section 2, by applying techniques in the paper by Rasmussen, Rasmussen \cite{simple} and Gillespie \cite{gillespie}, I show that none of the $(1,n)$ Dehn fillings on $m137$ is an L-space, completing the proof of Theorem \ref{result}.

\section*{Acknowledgements}
The author was partially supported by NSF grants DMS-1510204, and Campus Research Board grant RB15127. I would like to pay special thanks to my advisor, Nathan Dunfield for suggesting me this problem and offering me extraordinary help. I would also like to thank the referee for detailed and helpful comments and suggestions.

\section{$\widetilde{PSL_2(\mathbb{R})}$ representations}

I will prove Theorem \ref{result} (a) in this section.

SnapPy \cite{SnapPy} gives us the following presentation of the fundamental group of $\mathcal{M}=m137$:
\begin{align*}
\pi_1(\mathcal{M}) =\langle\alpha, \beta\ |\ \alpha^3\beta^2\alpha^{-1}\beta^{-3}\alpha^{-1}\beta^2\rangle.
\end{align*}
The peripheral system of $\mathcal{M}$ can be represented as:
\begin{align*}
\{\mu, \lambda\} = \{\alpha^{-1}\beta^2\alpha^4\beta^2, \alpha^{-1}\beta^{-1}\}=\{\beta^2\lambda^{-1}\beta^{-3}\lambda^{-1}\beta^2, \lambda\}
\end{align*}
%under certain triangulation,
where $\lambda$ is the homological longitude and $\mu$ is a choice of meridian.
Then we can rewrite the fundamental group as:
\begin{align}\label{pi1}
\pi_1(\mathcal{M}) =\langle\lambda, \beta\ |\ \beta^{-1}\lambda^{-1}\beta^{-1}\lambda^{-1}\beta^2\lambda=\lambda\beta^{-2}\lambda^{-1}\beta^2 \rangle,
\end{align}
and the meridian becomes $\mu = \beta^2\lambda^{-1}\beta^{-3}\lambda^{-1}\beta^2$ under this presentation.
%\newline
\begin{remark}
The triangulation of $m137$ we used (included in \cite{anc}) to get these presentations is different from SnapPy's default triangulation. We got it by performing random Pachner moves on the default triangulation in SnapPy. In particular, our notations for longitude and meridian in the peripheral system are meridian and longitude respectively in SnapPy's default notations.
\end{remark}

%We use Tillmann's notation.
We will first look at irreducible $SL_2(\mathbb{C})$ representations of the fundamental group of $\mathcal{M}$ before we look at those of Dehn fillings of $\mathcal{M}$.
Denote by $X(\mathcal{M})$ the $SL_2(\mathbb{C})$ character variety of $\mathcal{M}$, that is the Geometric Invariant Theory quotient Hom$(\pi_1(\mathcal{M}),SL_2(\mathbb{C}))//SL_2(\mathbb{C})$. It is an affine variety \cite{shalen}. Suppose $\rho: \pi_1(\mathcal{M})\longrightarrow SL_2(\mathbb{C})$ is a representation of the fundamental group of $\mathcal{M}$.
%Define reducible representation. cite probably. culler shalen
Recall that a representation $\rho$ of $G$ in $SL_2(\mathbb{C})$ is irreducible if the only subspaces of $\mathbb{C}^2$ invariant under $\rho(G)$ are $\{0\}$ and $\mathbb{C}^2$ \cite{shalen}. This is equivalent to saying that $\rho$ can't be conjugated to a representation by upper triangular matrices. Otherwise $\rho$ is called reducible. We will call a character irreducible (reducible) if the corresponding representation is irreducible (reducible).

First, I determine which components of $X(\mathcal{M})$ contain characters of irreducible representations.
Computation with SnapPy \cite{SnapPy} shows that the Alexander polynomial $\Delta_{\mathcal{M}}$ of $m137$ is $1$, which has no root. So there are no reducible non-abelian representations \cite[Section 6.1]{metabelian}. Therefore all the reducible representations are abelian.
Since $H_1(\mathcal{M})=\mathbb{Z}$, there is only one such component and it is parameterized by the image of $\beta$ and is isomorphic to Hom$(\mathbb{Z}, SL_2(\mathbb{C}))//SL_2(\mathbb{C})\simeq \mathbb{C}$.  %of $X(\mathcal{M})$ containing the characters of abelian representations is
Moreover, it is disjoint from any component of $X(\mathcal{M})$ containing the character of an irreducible representation \cite[Section 6.2]{metabelian}.
For more details, we refer the readers to Tillmann's note \cite{Tillmann} where he studied $m137$ as an example.

An abelian representation of $\pi_1(\mathcal{M})$ that induces an abelian representation of $\pi_1(\mathcal{M}(1,n))$ factors through the abelianization $ab(\pi_1(\mathcal{M}(1,n)))=1$. So they correspond to trivial $SL_2(\mathbb{C})$ representations and we don't need to worry about them.

Now we consider components of $X(\mathcal{M})$ that contain the character of an irreducible representation. We have:
\begin{lemma}\label{charlemma}
There is a single component $X_0(\mathcal{M})$ of $X(\mathcal{M})$ containing an irreducible character. The functions $s=tr \rho(\lambda)=tr \rho(\alpha^{-1}\beta^{-1})=tr \rho(\alpha\beta)$ and $t=tr \rho(\beta)$ give complete coordinates on $X_0(\mathcal{M})$, which is the curve in $\mathbb{C}^2$ cut out by
\begin{equation*}
(-2 - 3s + s^3)t^4 + (4 + 4s - s^2 - s^3)t^2  -1 = 0
\end{equation*}
Moreover, $w:=tr \rho(\lambda\beta)=tr \rho((\lambda\beta)^{-1})=t-\frac{1}{t(s+1)}$.
\end{lemma}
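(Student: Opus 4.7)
The plan uses classical Fricke coordinates. Since $\pi_1(\mathcal{M})$ is generated by the two elements $\lambda,\beta$, any $SL_2(\mathbb{C})$ character is determined by the three trace functions $s=tr\rho(\lambda)$, $t=tr\rho(\beta)$, and $w=tr\rho(\lambda\beta)$, so $X(\mathcal{M})$ embeds as an affine subvariety of $\mathbb{C}^3$ in these coordinates. I would translate the single defining relation of presentation~\eqref{pi1} into polynomial equations in $s,t,w$, strip off the abelian locus identified before the lemma, and show that what remains is exactly the curve asserted.

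\textbf{Trace reduction.} Rewrite the relation as $R=1$ for
\[
R=\beta^{-1}\lambda^{-1}\beta^{-1}\lambda^{-1}\beta^2\lambda\beta^{-2}\lambda\beta^2\lambda^{-1}.
\]
For an irreducible $\rho$, the condition $\rho(R)=I$ is equivalent to the four scalar equations $tr\rho(R)=2$, $tr\rho(R\lambda)=s$, $tr\rho(R\beta)=t$, $tr\rho(R\lambda\beta)=w$, because $I,\rho(\lambda),\rho(\beta),\rho(\lambda\beta)$ span $M_2(\mathbb{C})$ by Burnside and the trace form is non-degenerate. Each of these traces reduces to a polynomial in $s,t,w$ via iterated use of $tr(M^{-1})=tr(M)$, the Fricke identity $tr(AB)+tr(AB^{-1})=tr(A)tr(B)$, and the recursion $tr(A^{n+1})=tr(A)tr(A^n)-tr(A^{n-1})$; in practice this is best carried out in a computer algebra system. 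The ancillary trace equalities stated in the lemma are formal: $tr\rho(\alpha^{-1}\beta^{-1})=tr\rho(\lambda)=s$ by definition of $\lambda$; $tr\rho(\alpha\beta)=tr\rho(\beta\alpha)=tr\rho(\lambda^{-1})=s$ by cyclic invariance together with $tr(M)=tr(M^{-1})$; and the same $SL_2$ identity gives $w=tr\rho((\lambda\beta)^{-1})$.

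\textbf{Isolating $X_0$ and checking irreducibility.} Decompose the resulting ideal in $\mathbb{C}[s,t,w]$. The abelian component, parameterised by the image of $\beta$ as described before the lemma, accounts for all reducibles since $\Delta_{\mathcal{M}}=1$. On the complementary part I expect one equation to be linear in $w$ and to solve to $w=t-1/(t(s+1))$, so that the projection $(s,t,w)\mapsto(s,t)$ identifies $X_0(\mathcal{M})$ with a plane curve; back-substitution should then collapse the remaining trace equations to $(-2-3s+s^3)t^4+(4+4s-s^2-s^3)t^2-1=0$. Because this quartic is quadratic in $t^2$ with coefficients in $\mathbb{Z}[s]$, irreducibility in $\mathbb{C}[s,t]$ follows from verifying that its discriminant in $t^2$ is not a square in $\mathbb{C}(s)$ and that it admits no linear factor in $t^2$ over $\mathbb{C}(s)$, both finite checks. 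The main obstacle is the bookkeeping in the trace reduction: the word $R$ is of moderate length, producing unwieldy intermediate polynomials, and the delicate point is confirming that the final ideal factors cleanly into the abelian locus together with the claimed plane curve, with no spurious extra components.
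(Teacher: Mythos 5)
Your overall route matches the paper's in outline: the paper likewise reduces the problem to a computer-algebra computation of the defining ideal in the coordinates $(s,t,w)$ (it conjugates $\rho$ into the explicit normal form with $\rho(\lambda)$ upper and $\rho(\beta)$ lower triangular, feeds the entry equations of the relator together with $s=z+1/z$, $t=x+1/x$, $w=zx+(zx)^{-1}+y$ into a Gr\"obner basis computation, and obtains four generators of the radical ideal), then it extracts $w=t-\frac{1}{t(s+1)}$ by subtracting two of the generators and eliminates $w$ to land on the plane quartic. Your trace-calculus setup --- converting $\rho(R)=I$ into four trace equations via the fact that $I,\rho(\lambda),\rho(\beta),\rho(\lambda\beta)$ span $M_2(\mathbb{C})$ for irreducible $\rho$ --- is a legitimate substitute for the paper's matrix normal form, and your handling of the ancillary trace identities is correct.

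The genuine gap is in your irreducibility step. It is false that irreducibility of $Q(s,u):=(s-2)(s+1)^2u^2-(s-2)(s+2)(s+1)u-1$ as a quadratic in $u$ over $\mathbb{C}(s)$ implies irreducibility of $P(s,t)=Q(s,t^2)$ in $\mathbb{C}[s,t]$: the substitution $u=t^2$ admits factorizations that do not descend to $u$, namely Sophie Germain--type splittings $(t^2+bt+c)(t^2-bt+c)$ with $b\neq 0$. Over $\mathbb{Q}$ one has $t^4+4=(t^2+2t+2)(t^2-2t+2)$ while $u^2+4$ is irreducible; over $\mathbb{C}(s)$ one has $t^4+(2s-s^4)t^2+s^2=(t^2+s^2t+s)(t^2-s^2t+s)$, even though the discriminant in $u$, which equals $s^5(s^3-4)$, is not a square in $\mathbb{C}(s)$ (odd order of vanishing at $s=0$). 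So your two ``finite checks'' are, first, redundant with each other (for a quadratic over a field, having a linear factor in $u$ is the same as the discriminant being a square) and, second, insufficient to conclude irreducibility of the quartic in $t$. Your discriminant check does dispose of linear-in-$t$ factors, since $P$ is even in $t$ and a root $r(s)\neq 0$ would produce the factor $t^2-r^2$, i.e.\ a root of $Q$ over $\mathbb{C}(s)$; but the mixed quadratic-quadratic case is exactly what the paper's case analysis of the shapes $(at^2+bt+c)(dt^2+et-1/c)$ and $(at+c)(bt^3+dt^2+et-1/c)$, with $a,b,d,e\in\mathbb{C}[s]$ and $c\in\mathbb{C}\setminus\{0\}$, is there to rule out. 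The repair is short: evenness of $P$ in $t$ forces any factorization into two quadratics either to be even in each factor (the $u$-case your discriminant check covers) or, after normalizing leading coefficients, to have the symmetric form $(t^2+bt+c)(t^2-bt+c)$, which would make the constant term $-1/\bigl((s-2)(s+1)^2\bigr)$ of the monic normalization a square in $\mathbb{C}(s)$; it is not, having a pole of odd order at $s=2$. Together with checking that the actual discriminant $(s+1)^2(s-2)(s^3+2s^2-4s-4)$ is not a square in $\mathbb{C}(s)$ (true: $s-2$ divides it to odd order) and that $P$ has content $1$ in $\mathbb{C}[s]$ (its constant term is $-1$, so Gauss's lemma applies), this completes your argument.
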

\begin{proof}[Proof of Lemma \ref{charlemma}]
Let $X_0(\mathcal{M})$ be $X(\mathcal{M})-\{\text{reducible characters}\}$. From the discussion above, we know that all the reducible characters form a single component of $X(\mathcal{M})$ and this component is disjoint from any other component of $X(\mathcal{M})$. So $X_0(\mathcal{M})$ is Zariski Closed. We will show later that $X_0(\mathcal{M})$ is actually an irreducible algebraic variety, as claimed in the lemma.

Suppose $[\rho]\in X_0(\mathcal{M})$. So $\rho$ is an irreducible representation. By conjugating $\rho$ if necessary, we can assume that $\rho$ has the form
%After taking proper conjugation, we can write     $X(\mathcal{M})$
\begin{displaymath}
\rho(\lambda)=
\begin{pmatrix}
 z&1\\
 0&1/z
\end{pmatrix}
,\quad
\rho(\beta)=
\begin{pmatrix}
 x&0\\
 y& 1/x
\end{pmatrix}.
\end{displaymath}
From the relator of $\pi_1(\mathcal{M})$ in \eqref{pi1} we have $\rho(\beta)^{-1}\rho(\lambda)^{-1}\beta^{-1}\rho(\lambda)^{-1}\rho(\beta)^2\rho(\lambda)=\rho(\lambda)\rho(\beta)^{-2}\rho(\lambda)^{-1}\rho(\beta)^2$. Comparing the entries of the matrices on both sides, we get four equations. These four equations together with $s=z+1/z$, $t=x+1/x$ and $w=zx+z^{-1}x^{-1}+y$ form a system $\mathcal{S}$ which defines $X_0(\mathcal{M})$. %then we get the character variety of $m137$ \cite{Tillmann}:
By computing a Gr\"{o}bner basis of this system, SageMath \cite{sage} gives the following generators of the radical ideal $I=I(X_0(\mathcal{M}))$:

\begin{align}
&stw - t^2 - w^2 - s + 2 \label{1}\\
&t^3 - w^3 + st - sw - 2t + w\\
&st^2 - tw - w^2 - s + 1\label{3}\\
&sw^3 - s^2t + s^2w - t^2w - tw^2 + st - sw + t
\end{align}

Subtracting \eqref{3} from \eqref{1}, we get:
\begin{align}\label{wst}
w=t-\frac{1}{t(s+1)}.
\end{align}
Eliminating $w$, we get a defining equation for $X_0(\mathcal{M})$:%the character variety of non-abelian representations:
\begin{equation}\label{char}
\begin{split}
0 &= (-2 - 3s + s^3)t^4 + (4 + 4s - s^2 - s^3)t^2 -1\\
&= (s - 2)(s + 1)^2t^4 - (s - 2)(s + 2)(s + 1)t^2 -1.
\end{split}
\end{equation}
Thus, we can think of $X_0(\mathcal{M})$ as living in $\mathbb{C}^2$.

To prove the lemma, we must show that $X_0(M)$ is irreducible or equivalently the polynomial $P(s,t):=(s - 2)(s + 1)^2t^4 - (s - 2)(s + 2)(s + 1)t^2 -1$ in \eqref{char} does not factor in $\mathbb{C}[s,t]$. Assume $P(s,t)$ factors. Suppose it factors as
\begin{displaymath}
(at^2+bt+c)(dt^2+et-1/c)=adt^4+(ae+bd)t^3+(cd-a/c+be)t^2+(ce-b/c)t-1,
\end{displaymath} where $a,b,d,e\in \mathbb{C}[s]$ and $c\in \mathbb{C}-\{0\}$.
Setting the coefficients of $t$ and $t^3$ to be $0$, we get $b=c^2e$ and $ae=-c^2de$. If $e\neq 0$, then $a=-c^2d$. But this is impossible as $ad=(s - 2)(s + 1)^2$ is a polynomial in $s$ of odd degree. So $e=0$ and it follows that $b=0$. Comparing the coefficients of $t^2$ and $t^4$, we get
\begin{equation}\label{eq3}
ad=(s-2)(s+1)^2
\end{equation}
and
\begin{equation}\label{eq4}
cd-a/c=-(s-2)(s+2)(s+1).
\end{equation}
So $\text{degree}(a)+\text{degree}(d)=3$ and $\text{max}\{\text{degree}(a), \text{degree}(d)\}\geq 3$, which implies exactly one of $a$ and $d$ has degree $3$ and the other has degree $0$. Without loss of generality, we can assume that $\text{degree}(a)=3$ and $\text{degree}(d)=0$. Multiply both sides of \eqref{eq4} by $c$, we get $a=c^2d+c(s-2)(s+2)(s+1)$. So the coefficient of $s^3$ in $a$ is $c$. Comparing with the coefficient of $s^3$ in \eqref{eq3}, we know that $d=1/c$. Eliminating $a$ and $d$ gives us an equality $1+(s-2)(s+2)(s+1)=(s-2)(s+1)^2$, which does not hold.

Else suppose $P(s,t)$ factors as
\begin{displaymath}
(at+c)(bt^3+dt^2+et-1/c)=abt^4+(ad+cb)t^3+(cd+ae)t^2+(ce-a/c)t-1,
\end{displaymath}
where $a,b,d,e\in \mathbb{C}[s]$ and $c\in \mathbb{C}-\{0\}$. Setting the coefficients of $t$ and $t^3$ to be $0$, we get $a=c^2e$ and $b=ced$. Comparing the coefficients of $t^2$ and $t^4$, we get
\begin{equation}\label{eq1}
c^3de^2=(s-2)(s+1)^2
\end{equation}
and
\begin{equation}\label{eq2}
cd+c^2e^2=-(s-2)(s+2)(s+1).
\end{equation}
So $\text{degree}(d)+2\text{degree}(e)=3$ and $\text{max}\{\text{degree}(d), 2\text{degree}(e)\}\geq 3$ which implies $\text{degree}(d)=3$ and $\text{degree}(e)=0$. Comparing the coefficients of $s^3$ in \eqref{eq1} and \eqref{eq2}, we know that $c^2e^2=-1$. Plugging into \eqref{eq1}, we get $cd=(s-2)(s+1)^2$, which when plugging into \eqref{eq2} implies $c^2e^2=-(s+1)(s-2)$, a contradiction.
So $P(s,t)$ is irreducible over $\mathbb{C}$. Therefore $X_0(\mathcal{M})$ has only one component.
\end{proof}

To find irreducible $SL_2(\mathbb{R})$ representations of $\pi_1(\mathcal{M})$, we need to check all real points on $X_0(\mathcal{M})$, which correspond to real solutions of \eqref{char}.
Notice that equation \eqref{char} has no solutions when $s=-1 \text{ or } 2$, so \eqref{char} is a quadratic equation in $t^2$. In order for $t$ to be real, $t^2$ has to be real and nonnegative. Then first we need the discriminant to be nonnegative. That is:
\begin{displaymath}
\Delta_1=(s+1)^2(s-2)(s^3+2s^2-4s-4)\geq 0.
\end{displaymath}
So $s\in U:=(-\infty, p_1]\cup[p_2, p_3]\cup(2, \infty)$, where $p_1\approx -2.9032$, $p_2\approx -0.8061$ and $p_3\approx 1.7093$ are three roots of cubic polynomial $s^3+2s^2-4s-4$.

The following lemma will help us determine when a $SL_2(\mathbb{C})$ representation of $\pi_1(\mathcal{M})$ can be conjugated into $SL_2(\mathbb{R})$ by simply checking where it lies on the character variety.
\begin{lemma}\label{main lemma}
The real points $X_{0,\mathbb{R}}(\mathcal{M})=X_0(\mathcal{M})\cap \mathbb{R}^2$ of $X_0(\mathcal{M})$ has $6$ connected components:

Points on the two components with $|s|<2$ correspond to $SU_2(\mathbb{C})$ representations.% 2 comp

Points on the four components with $|s|>2$ correspond to $SL_2(\mathbb{R})$ representations. %4 comp
\end{lemma}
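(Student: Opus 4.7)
I would split the argument into three steps, matching the structure of the statement.

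\emph{Counting the six components.} I view the defining equation \eqref{char} as a quadratic in $u:=t^2$ with coefficients $A=(s-2)(s+1)^2$, $B=-(s-2)(s+2)(s+1)$, and $C=-1$, so by Vieta's formulas $u_1u_2=-1/A$ and $u_1+u_2=(s+2)/(s+1)$. On each of the three $s$-subintervals already cut out by $\Delta_1\ge 0$, I check the signs of these symmetric functions. On $(2,\infty)$ one has $A>0$ and hence $u_1 u_2<0$, so exactly one $u_i$ is positive and we get two branches $t=\pm\sqrt u$, contributing two components. On $(-\infty,p_1]$ and on $[p_2,p_3]$ one has $A<0$ and $(s+2)/(s+1)>0$, so both $u_i$ are positive, and the four branches $\pm\sqrt{u_1},\pm\sqrt{u_2}$ merge pairwise at the discriminant zeros. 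In the bounded interval $[p_2,p_3]$ these close up into two compact ovals (one in each half-plane $t>0$ and $t<0$), while on $(-\infty,p_1]$ they form two half-infinite arcs. Altogether $X_{0,\mathbb{R}}(\mathcal{M})$ has $2+2+2=6$ connected components, and from $p_1<-2<p_2<p_3<2$ the two ovals sit in $|s|<2$ while the other four sit in $|s|>2$.

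\emph{The $|s|>2$ components.} An irreducible $SL_2(\mathbb{C})$ representation with real character is conjugate into exactly one of the real forms $SU_2$ or $SL_2(\mathbb{R})$, by the standard invariant Hermitian-form argument. Since $\mathrm{tr}\,\rho(\lambda)=s$ has $|s|>2$, the element $\rho(\lambda)$ has real eigenvalues off the unit circle and cannot be conjugate into the compact group $SU_2$. Hence on these four components $\rho$ is conjugate into $SL_2(\mathbb{R})$.

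\emph{The $|s|<2$ components.} By the same dichotomy it suffices to rule out $SL_2(\mathbb{R})$ on each oval. The $SU_2$- and $SL_2(\mathbb{R})$-loci are disjoint in $X_{0,\mathbb{R}}(\mathcal{M})$, because any shared character would have image in the abelian group $SO_2=SU_2\cap SL_2(\mathbb{R})$ and hence be reducible, whereas $X_0(\mathcal{M})$ contains only irreducible characters. Both loci are also open in $X_{0,\mathbb{R}}(\mathcal{M})$: near an irreducible $SU_2$ character the invariant positive-definite Hermitian form persists under small real deformations, and computing $H^1(\pi_1,\mathfrak{su}_2)$ and $H^1(\pi_1,\mathfrak{sl}_2(\mathbb{R}))$ shows each locus has real dimension equal to that of the real curve $X_{0,\mathbb{R}}(\mathcal{M})$. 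Connectedness then forces each oval to be entirely $SU_2$ or entirely $SL_2(\mathbb{R})$. To pin down the alternative I would evaluate at the convenient point $s=0$: there \eqref{char} reduces to $2t^4-4t^2+1=0$ so $t^2=1\pm\tfrac{\sqrt 2}{2}$, \eqref{wst} gives $w=t-1/t$, and one verifies $|s|,|t|,|w|<2$ while the Fricke commutator trace $s^2+t^2+w^2-stw-2$ equals $0$. These inequalities place the character in the classical $SU_2$-region of Fricke space, and the $t\mapsto -t$ symmetry of \eqref{char} handles the second oval simultaneously.

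\emph{Main obstacle.} The counting and the $|s|>2$ cases are mechanical once the real-form dichotomy is invoked. The substance is in the $|s|<2$ case, where the dichotomy itself does not identify which real form a given character comes from; the leverage is the partition of $X_{0,\mathbb{R}}(\mathcal{M})$ into disjoint open $SU_2$- and $SL_2(\mathbb{R})$-loci (made possible by excluding reducible characters), after which a single explicit $SU_2$ character per oval is enough.
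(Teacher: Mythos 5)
Your proposal is correct in substance, and on the component count it is actually more detailed than the paper, which simply asserts that the six components correspond to the three $s$-intervals $(-\infty,p_1]$, $[p_2,p_3]$, $(2,\infty)$ crossed with $t>0$ and $t<0$; your Vieta analysis of \eqref{char} as a quadratic in $u=t^2$ supplies that verification. Where you genuinely diverge is the $|s|<2$ case. The paper argues globally and algebraically: it applies the $SU_2$ criterion of Lemma \ref{su2c} to the triple $\rho(\lambda)$, $\rho(\beta)$, $\rho((\lambda\beta)^{-1})$, substitutes $w=t-\frac{1}{t(s+1)}$ and the curve equation \eqref{char} into $(2w-st)^2\le(4-s^2)(4-t^2)$, and reduces it to $(s+1)^3(s-2)t^2\le 0$, which holds exactly over $s\in[p_2,p_3]$; together with the $SU_2$/$SL_2(\mathbb{R})$ dichotomy this settles all six components at once, with no topology of the representation loci needed. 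You instead check the criterion at one point per oval and propagate by connectedness. Note that your Fricke condition is the same criterion in disguise: expanding $(2w-st)^2\le(4-s^2)(4-t^2)$ gives precisely $s^2+t^2+w^2-stw\le 4$, and your computation at $s=0$ (commutator trace $2t^2+t^{-2}-4=0$ on the curve, all traces in $(-2,2)$) verifies it locally. Two repairs are needed for your route to be airtight. First, the openness justification is the weak link: an $H^1$ dimension count cannot yield openness, since a closed sub-arc of the real curve has full dimension. The standard fix is to show both loci are \emph{closed} in the irreducible real locus ($SU_2$ characters by compactness of $SU_2$; $SL_2(\mathbb{R})$ characters because a sequence of $SL_2(\mathbb{R})$ representations whose characters converge to an irreducible character can be conjugated so the representations converge), after which disjointness plus the covering dichotomy makes each locus clopen. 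Second, disjointness is not because a shared character would have image in $SO_2$; the correct statement is that an irreducible representation conjugate into both real forms would preserve two non-proportional Hermitian forms and hence be reducible, which is exactly \cite[Lemma 2.10]{CD16}, the lemma the paper cites at this point. With those patches your proof is complete; its advantage is that it does not depend on the fortunate simplification of the trace inequality by the curve equation, while the paper's global computation avoids the clopen argument entirely and shows directly that the $SU_2$ inequality holds identically on the ovals and nowhere else.
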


\begin{remark}
The above lemma shows that in our case, the absolute value of one character being smaller than $2$ implies that the representation is $SU_2(\mathbb{C})$. But in general, this is not true.
\end{remark}

To prove this lemma, we need to determine when $[\rho]\in X_{0,\mathbb{R}}(\mathcal{M})$ corresponds to $\rho \in SU_2(\mathbb{C})$ and when it corresponds to $\rho \in SL_2(\mathbb{R})$. It can't be in both because otherwise it would be reducible \cite[Lemma 2.10]{CD16} and we know $X_0(\mathcal{M})$ contains only irreducible characters.
The tool we use is a reformulation of Proposition 3.1 in \cite{SU} which states that given three angles $\theta_i\in[0,\pi]$, $i=1,2,3$, there exist three $SU_2(\mathbb{C})$ matrices $C_i$, satisfying $C_1 C_2 C_3 =I$ with eigenvalues $\exp(\pm i
\theta_i)$ respectively if and only if these angles satisfy:
\begin{align}\label{ineq}
|\theta_1-\theta_2| \leq \theta_3 \leq \min\{\theta_1+\theta_2, 2\pi - (\theta_1+\theta_2) \}.
\end{align}
We want to rewrite the above inequality in terms of traces of $C_1, C_2$ and $C_3$. We have the following lemma:

\begin{lemma}\label{su2c}
Suppose $t_1, t_2, t_3\in (-2,2)$ are the traces of three matrices $C_1, C_2, C_3\in SL_2(\mathbb{C})$ satisfying $C_1C_2C_3=I$. Then $C_1, C_2, C_3$ are simultaneously conjugate into $SU_2(\mathbb{C})$ if and only if
\begin{align*}
(2t_3-t_1t_2)^2\leq (4-t_1^2)(4-t_2^2).
\end{align*}
\end{lemma}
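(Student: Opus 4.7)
My plan is to reduce the trace inequality to the angle inequality of Proposition 3.1 in \cite{SU} via the substitution $t_i = 2\cos\theta_i$ with $\theta_i \in (0,\pi)$, and then use Fricke's rigidity theorem to upgrade existence of $SU_2(\mathbb{C})$ matrices with the correct traces to actual simultaneous conjugacy of the given $C_i$.

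First I would verify the trigonometric equivalence. Since $\cos$ is strictly decreasing on $[0,\pi]$, the angle chain $|\theta_1-\theta_2| \le \theta_3 \le \min\{\theta_1+\theta_2,\,2\pi-(\theta_1+\theta_2)\}$ is equivalent (case-splitting on whether $\theta_1+\theta_2$ lies in $[0,\pi]$ or $[\pi,2\pi]$, and using $\cos(2\pi-\phi)=\cos\phi$) to
\begin{equation*}
\cos(\theta_1+\theta_2) \,\le\, \cos\theta_3 \,\le\, \cos(\theta_1-\theta_2).
\end{equation*}
Expanding the outer cosines by the sum-of-angles formula, this is the single inequality $|\cos\theta_3-\cos\theta_1\cos\theta_2|\le \sin\theta_1\sin\theta_2$, where $\sin\theta_i\ge 0$ makes the right-hand side non-negative. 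Squaring and then multiplying by $16$, together with $\cos\theta_i = t_i/2$ and $\sin^2\theta_i = 1-t_i^2/4$, gives exactly $(2t_3-t_1t_2)^2 \le (4-t_1^2)(4-t_2^2)$.

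With this equivalence in hand, the ``only if'' direction is immediate: if $(C_1,C_2,C_3)$ is simultaneously conjugate into $SU_2(\mathbb{C})$, then applying Proposition 3.1 of \cite{SU} to the conjugated triple yields the angle inequality, hence the trace inequality. For the ``if'' direction, I would assume the trace inequality, deduce the angle inequality, and invoke Proposition 3.1 of \cite{SU} to produce $SU_2(\mathbb{C})$ matrices $C_1',C_2',C_3'$ satisfying $C_1'C_2'C_3'=I$ with $\mathrm{tr}\,C_i' = t_i$. Since $t_3 = \mathrm{tr}(C_1C_2) = \mathrm{tr}(C_1'C_2')$, the pairs $(C_1,C_2)$ and $(C_1',C_2')$ have the same Fricke coordinates $(\mathrm{tr}\,A,\mathrm{tr}\,B,\mathrm{tr}\,AB)$; if the pair $(C_1,C_2)$ is irreducible, Fricke's theorem provides $P\in SL_2(\mathbb{C})$ with $PC_iP^{-1}=C_i'\in SU_2(\mathbb{C})$ for $i=1,2$, and then automatically for $i=3$.

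The main subtlety I anticipate is the reducible case of the ``if'' direction: if $(C_1,C_2)$ is reducible, a direct upper-triangularization shows $t_3 = 2\cos(\theta_1\pm\theta_2)$, so the trace inequality automatically holds with equality, yet simultaneous conjugation into $SU_2(\mathbb{C})$ succeeds only when $C_1$ and $C_2$ commute (since an upper-triangular matrix lies in $SU_2(\mathbb{C})$ iff it is diagonal). In the application of this lemma in Lemma \ref{main lemma} to irreducible characters $[\rho]\in X_0(\mathcal{M})$, the triple $(C_1,C_2,C_3)$ will come from a generating set of an irreducible representation of $\pi_1(\mathcal{M})$, so the pair is automatically irreducible and the Fricke step goes through without issue.
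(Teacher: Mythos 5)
Your proof is correct, and its computational core --- the case split on whether $\theta_1+\theta_2$ exceeds $\pi$, followed by the cosine manipulation leading to $(2t_3-t_1t_2)^2\leq(4-t_1^2)(4-t_2^2)$ --- is exactly the paper's proof of Lemma \ref{su2c}; in fact that trigonometric equivalence is the \emph{entirety} of the paper's proof. Where you differ is that you prove strictly more than the paper does. Proposition 3.1 of \cite{SU} is an existence statement: given angles satisfying \eqref{ineq}, \emph{some} $SU_2(\mathbb{C})$ triple $C_1'C_2'C_3'=I$ with those eigenvalues exists. The paper tacitly identifies this with the assertion that the \emph{given} triple $(C_1,C_2,C_3)$ is simultaneously conjugate into $SU_2(\mathbb{C})$, and your Fricke step --- an irreducible pair in $SL_2(\mathbb{C})$ is determined up to conjugacy by $(\mathrm{tr}\,C_1,\mathrm{tr}\,C_2,\mathrm{tr}\,C_1C_2)$ --- is precisely the missing bridge. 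It is genuinely needed: the ``if'' direction is what Lemma \ref{main lemma} uses to conclude that real characters with $|s|<2$ come from $SU_2(\mathbb{C})$ representations, and without it the dichotomy of \cite[Lemma 2.10]{CD16} alone would not rule out an $SL_2(\mathbb{R})$ representation whose three traces happen to satisfy the inequality (elliptic elements of $SL_2(\mathbb{R})$ also have trace in $(-2,2)$). Your reducibility caveat is likewise accurate: for a reducible non-commuting pair the inequality holds with equality while conjugation into $SU_2(\mathbb{C})$ is impossible (a unitary reducible pair preserves an orthogonal splitting, hence commutes), so as literally stated the lemma has a false edge case that your irreducibility hypothesis repairs; and in the application $C_1=\rho(\lambda)$ and $C_2=\rho(\beta)$ generate the image of the irreducible representation $\rho$ by presentation \eqref{pi1}, so the pair is irreducible and nothing downstream is affected.
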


\begin{proof}%[Proof of Lemma \ref{su2c}]

Suppose $t_1=2\cos(\theta_1)$, $t_2=2\cos(\theta_2)$ and $t_3=2\cos(\theta_3)$ where $\theta_1, \theta_2, \theta_3\in [0,\pi]$.

If $0\leq\theta_1+\theta_2\leq \pi$, then the inequality \eqref{ineq} becomes $|\theta_1-\theta_2| \leq \theta_3 \leq \theta_1+\theta_2$. Taking cosine, we get $\cos(\theta_1+\theta_2) \leq \cos(\theta_3) \leq \cos(\theta_1-\theta_2)$.

If $\pi\leq\theta_1+\theta_2\leq 2\pi$, then the inequality becomes $|\theta_1-\theta_2| \leq \theta_3 \leq 2\pi-(\theta_1+\theta_2)$. Taking cosine, we also get $\cos(\theta_1+\theta_2) \leq \cos(\theta_3) \leq \cos(\theta_1-\theta_2)$.

Use the relations $t_1=2\cos(\theta_1)$, $t_2=2\cos(\theta_2)$, and $t_3=2\cos(\theta_3)$, we get in both cases that:
\begin{displaymath}
\frac{t_1t_2}{4}-\sqrt{\left(1-\frac{t_1^2}{4}\right)\left(1-\frac{t_2^2}{4}\right)}\leq \frac{t_3}{2}\leq \frac{t_1t_2}{4}+\sqrt{\left(1-\frac{t_1^2}{4}\right)\left(1-\frac{t_2^2}{4}\right)}.
\end{displaymath}
Then
\begin{displaymath}
-\sqrt{\left(1-\frac{t_1^2}{4}\right)\left(1-\frac{t_2^2}{4}\right)}\leq \frac{t_3}{2}-\frac{t_1t_2}{4}\leq \sqrt{\left(1-\frac{t_1^2}{4}\right)\left(1-\frac{t_2^2}{4}\right)}.
\end{displaymath}
So we have:
\begin{align*}
\left|\frac{t_3}{2}-\frac{t_1t_2}{4}\right|\leq \sqrt{\left(1-\frac{t_1^2}{4}\right)\left(1-\frac{t_2^2}{4}\right)}.
\end{align*}
Squaring both sides and simplifying, we get
\begin{align*}
(2t_3-t_1t_2)^2\leq (4-t_1^2)(4-t^2),
\end{align*}
as desired.
\end{proof}

%--------------------------------------------------------------------------
With the criterion of Lemma \ref{su2c} in hand, we now can prove Lemma \ref{main lemma}.
\begin{proof}[Proof of Lemma \ref{main lemma}]

The six components correspond to $s\in (-\infty, p_1]\cup[p_2, p_3]\cup(2, \infty)$ and $t\in (-\infty,0)\cup(0, \infty)$.

Set $C_1=\rho(\lambda)$, $C_2=\rho(\beta)$ and $C_3=\rho(\beta^{-1}\lambda^{-1})=\rho((\lambda \beta)^{-1})$. Then $t_1=s$, $t_2=t$ and $t_3=w$. Applying Lemma \ref{su2c} we have:
\begin{align}\label{above}
(2w-st)^2\leq (4-s^2)(4-t^2).
\end{align}
Plugging \eqref{wst} into \eqref{above} and simplifying:
\begin{align*}
(s-2)^2t^2+\frac{4(s-2)}{s+1}+\frac{4}{t^2(s+1)^2}\leq (4-s^2)(4-t^2).
\end{align*}
Multiplying both sides by $t^2(s+1)^2$, we get:
\begin{align*}
(s+1)^2(s-2)^2t^4+4(s-2)(s+1)t^2+4\leq (4-s^2)(s+1)^2(4-t^2)t^2.
\end{align*}
which simplifies to:
\begin{align*}
-(s + 1)^2(s - 2)t^4 + (s^2+3s+3)(s - 2)(s + 1)t^2 + 1\leq 0.
%((s + 1)^2(s - 2)^2 - (s^2 - 4)(s + 1)^2)t^4 + 4((s^2 - 4)(s + 1)^2 + (s - 2)(s + 1))t^2 + 4\leq 0
\end{align*}
Plugging in \eqref{char}, we get
\begin{align*}
(s+1)^3(s-2)t^2\leq 0,
\end{align*}
which always holds when $s\in(p_2\approx-0.8061,p_3\approx1.7093)\subset(-2,2)$.

So, points on $X_{0,\mathbb{R}}(\mathcal{M})$ correspond to $SU_2(\mathbb{C})$ representations if and only if $|s|<2$ and correspond to $SL_2(\mathbb{R})$ representations if and only if $|s|>2$.
\end{proof}

%-----------------------------------------------

\begin{proof}[Proof of Theorem \ref{result} (a)]
Lemma 2 tells us a $SL_2(\mathbb{C})$ representation $\rho$ of $m137$ is real if and only if eigenvalues of $\rho(\lambda)$ are real. Moreover, the condition $\mu\lambda^n=1$ forces the eigenvalues of $\rho(\mu)$ to also be real in this case. So we could restrict our attention to $|s|>2$ and look at the A-polynomial instead (see e.g. \cite{metabelian} for definition of A-polynomial). Recall that $z$ is an eigenvalue of $\rho(\lambda)$. Denote by $m$ the eigenvalue of $\rho(\mu)$ which shares the same eigenvector with $z$.
The A-polynomial of $m137$ is computed by SAGE \cite{sage} as:
\begin{equation*}
%\label{Apoly}
\begin{split}
&(z^4+2z^5+3z^6+z^7-z^8-3z^9-2z^{10}-z^{11})+m^2(-1-3z-2z^2-z^3\\
&+2z^4+4z^5+z^6+4z^7+z^8+4z^9+2z^{10}-z^{11}-2z^{12}-3z^{13}-z^{14})\\
&+m^4(-z^3-2z^4-3z^5-z^6+z^7+3z^8+2z^9+z^{10}).
\end{split}
\end{equation*}
%It's the relation that the fundamental group of $\mathcal{M}$ restricts on the eigenvalues of $\rho(\mu)$ and $\rho(\lambda)$.
Denote by $A=-1-2z-3z^2-z^3+z^4+3z^5+2z^6+z^7=(z-1)(z^2+z+1)^3$ and
$B=1+3z+2z^2+z^3-2z^4-4z^5-z^6-4z^7-z^8-4z^9-2z^{10}+z^{11}+2z^{12}+3z^{13}+z^{14}$.
So the A-polynomial could be simplified as $-z^4A-Bm^2+z^{3}Am^4$. We are interested in the real solutions of
\begin{equation}\label{Apoly2}
-z^4A-Bm^2+z^{3}Am^4=0.
\end{equation}

Now consider the $(1, n)$ Dehn filling on $m137$. Then we are adding an extra relation $\mu\lambda^n=1$, which is $\rho(\mu)\rho(\lambda)^n=I$ under the representation $\rho$, i.e.
\begin{align*}
\rho(\mu)=\rho(\lambda)^{-n}=
\begin{pmatrix}
 z^{-n} &  * \\ 0 &  z^n
\end{pmatrix}.
\end{align*}
%-----------------------------------------------------
Restricting to $\partial \mathcal{M}$ gives us the relation $m=z^{-n}$.

When $n$ is negative, we shall denote $n'=-n$. So we have $m=z^{n'}$.
%$lz^{n}=1$
%$lz^{-n'}=1$
Plugging into \eqref{Apoly2} and dividing both sides by $z^4$, we get
\begin{equation}\label{Apoly3}
-A-Bz^{2n'-4}+Az^{4n'-1}=0.
\end{equation}
%$-z^4A-Bz^{2n'}+z^3Az^{4n'}=0$
We will show the following lemma is true, completing the proof of Theorem \ref{result} (a).
\begin{lemma}\label{apolylemma}
Equation \eqref{Apoly3} has no real solutions when $n'$ is large enough.
\end{lemma}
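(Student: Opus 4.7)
The plan is to analyze $E(z):=-A(z)-B(z)z^{2n'-4}+A(z)z^{4n'-1}$ region by region on $\mathbb{R}$, exploiting that the three summands have very different growth rates in $n'$. The A-polynomial is invariant under the eigenvalue-swap symmetry $(m,z)\leftrightarrow(1/m,1/z)$, which is preserved by the Dehn-filling constraint $m=z^{n'}$, so I may restrict attention to $|z|\geq 1$. By Lemma~\ref{main lemma} the relevant range is real $z$ with $|s|=|z+1/z|>2$, which rules out $z\in\{0,\pm 1\}$, so it suffices to show $E(z)\neq 0$ for real $z$ with $|z|>1$.

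On a compact subset of $(1,\infty)\cup(-\infty,-1)$ bounded away from $\pm 1$, $A(z)=(z-1)(z^2+z+1)^3$ is bounded away from zero, being positive on $(1,\infty)$ and negative on $(-\infty,-1)$. The ratio $|Az^{4n'-1}|/|Bz^{2n'-4}|=(|A|/|B|)|z|^{2n'+3}$ diverges, so $Az^{4n'-1}$ dominates in magnitude, and a parity check on $z^{4n'-1}$ shows $Az^{4n'-1}>0$ in both regions. Hence $E>0$ uniformly on such compact sets for all sufficiently large $n'$.

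The boundary $z=1$ is handled by $A(1)=0$, $B(1)=-4$, giving $E(1)=4>0$; rewriting $E=A(z)(z^{4n'-1}-1)-B(z)z^{2n'-4}$ further shows both summands are positive on a fixed neighborhood of $z=1$, uniformly in $n'$. The case $z=-1$ is more subtle: one computes $E(-1)=0$ for every $n'$, and in fact $E'(-1)=0$ identically, so $(z+1)^2$ divides $E$. However, $s=-2$ at $z=-1$ lies outside $|s|>2$, and Lemma~\ref{charlemma} shows there is no real $t$ with $P(-2,t)=0$, so this root is spurious and does not correspond to any $SL_2(\mathbb{R})$ representation.

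The main obstacle is establishing $E>0$ in a punctured neighborhood of $z=-1$. Substituting $z=-1+\eta$ and using the parity identities $(-1+\eta)^{2k}=(1-\eta)^{2k}$ and $(-1+\eta)^{2k+1}=-(1-\eta)^{2k+1}$ together with $A(-1+\eta)=-2+7\eta+O(\eta^2)$ and $B(-1+\eta)=4-28\eta+O(\eta^2)$, one obtains
\begin{equation*}
E(-1+\eta)=2-4(1-\eta)^{2n'-4}+2(1-\eta)^{4n'-1}+O(\eta).
\end{equation*}
Rescaling $w:=n'\eta$ and letting $n'\to\infty$ with $w$ fixed, the leading expression converges to $2(1-e^{-2w})^2$, which is nonnegative and vanishes only at $w=0$; for $w$ bounded away from zero this provides a uniform positive lower bound. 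For $|w|$ very small (i.e., $\eta=o(1/n')$), the quadratic Taylor expansion yields $E(-1+\eta)\sim 8n'^2\eta^2>0$ for $\eta\neq 0$. Splicing these two regimes proves $E>0$ on a punctured neighborhood of $z=-1$; making this splicing genuinely uniform in $n'$ is the principal technical hurdle.
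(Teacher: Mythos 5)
Your proposal is correct in substance but takes a genuinely different route from the paper's at the only delicate point. The paper never confronts $z=-1$: before stating the lemma it has already restricted $z$ to $V=(-\infty,-2.5038]\cup[-0.3994,0)\cup(0,1)\cup(1,\infty)$, using not just $|s|>2$ but the full discriminant constraint $s\in U$ (real points of $X_0(\mathcal{M})$ require $s\le p_1\approx -2.9032$ or $s>2$), then applies the symmetry $F(1/z)=F(z)/z^{4n'+6}$ to reduce to $|z|<1$, where everything follows from two elementary sign-and-growth cases on $[0.8684,1)$ and $[-0.3994,0.8684)\setminus\{0\}$. You instead keep only the constraint $|s|>2$, so your domain $|z|>1$ contains points arbitrarily close to $-1$, and you are forced to fight the genuine double root of $F$ there; your computation that $F(-1)=F'(-1)=0$ for every $n'$ checks out (as do $A'(-1)=7$, $B'(-1)=-28$), and it is a worthwhile observation, since it shows the lemma as literally stated needs the implicit restriction $z\in V$, which the paper's proof builds in. Your blow-up analysis near $z=-1$ is sound, and the splicing you flag as the principal hurdle in fact closes more easily than you fear: setting $Q=(1-\eta)^{2n'-4}$ and using $q^{4n'-1}=Q^2(1-\eta)^7$, one gets $F(-1+\eta)=2(Q-1)^2-7\eta(3Q-1)(Q-1)+O\bigl(\eta^2(Q+1)^2\bigr)$, and on your side $\eta<0$ both leading terms are nonnegative (indeed reinforcing, with no cancellation to control), after which the error is absorbed uniformly for $n'$ large in both the $Q\approx 1$ and $Q\gg 1$ regimes. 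Two smaller points: your region away from $\pm1$ is unbounded, not compact, though the dominance of $Az^{4n'-1}$ only improves as $|z|\to\infty$ since $|B/A|=O(|z|^7)$; and had you invoked the constraint $s\in U$ on a whole neighborhood --- as you implicitly do pointwise when noting $P(-2,t)=-4t^4-1\neq 0$ --- the interval $(-2.5038,-0.3994)$, and with it your entire hard case, would be excised wholesale, which is precisely what the paper's shorter argument buys.
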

%We shall prove that it has no real root when $n'$ is large enough.
\begin{proof}[Proof of Lemma \ref{apolylemma}]
Define $F(z)=A(z^{4n'-1}-1)-Bz^{2n'-4}$. We'll show $F(z)>0$.

First notice that $A=0$ only when $z=1$. And $A>0$ when $z>1$ while $A<0$ when $z<1$.
The polynomial $B$ has 6 real roots which are all simple: $-2.3396$, $-1.4121$, $-0.7082$, $-0.4274$, $0.8684$, $1.1516$ (rounded to the fourth digit).% We will divide the region of $z$ according

As we saw earlier, the domain for $s$ is $U:=(-\infty, p_1\approx -2.9032]\cup[p_2\approx -0.8061, p_3\approx 1.7093]\cup(2, \infty)$. So the $|s|>2$ condition restricts $s$ to $(-\infty, p_1\approx -2.9032]\cup(2, \infty)$. Then $z\in V:=(-\infty, -2.5038]\cup[-0.3994, 0)\cup(0, 1)\cup(1,\infty)$.
Notice that $z^7A(1/z)=-A(z)$ and $z^{14}B(1/z)=B(z)$. Interchange $z$ with $1/z$ in $F(z)$ gives us $F(1/z)=A(1/z)(z^{-(4n'-1)}-1)-B(1/z)z^{-(2n'-4)}=F(z)/z^{4n'+6}$. So we can assume $|z|<1$.

\subsubsection*{case 1: $0.8684\leq z<1$}
In this case, we have $A(z)<0$, $B(z)\leq 0$ and $z^{4n'-1}-1<0$. So $F(z)>0$.
%B\leq C. z>Cz^{2n'} for C>0 constant when $n'$ is large enough. So F<0 when $n'$ is large enough

\subsubsection*{case 2: $-0.3994\leq z<0.8684$ and $z\neq 0$}
In this case, we have $A(z)<C_5<0$ and $C_6>B(z)>0$ for some constants $C_5$ and $C_6$. When $n'$ is large enough, we have $|C_5|\times|(z^{4n'-1}-1)|>C_6z^{2n'-4}$. So $A(z^{4n'-1}-1)=|A|\times|(z^{4n'-1}-1)|>Bz^{2n'-4}$ and it follows that $F(z)>0$.

Therefore when $n'=-n$ is large enough, we always have $F(z)>0$ on the domain $V$. So equation \eqref{Apoly3} has no real solution when $n'\gg 0$.
\end{proof}
It follows from the above lemma that equation \eqref{Apoly2} has no real solution when $n\ll 0$ and thus equality $\rho(\mu)\rho(\lambda)^n=I$ does not hold for $n\ll 0$.

From all the discussion above, we can now conclude that %the $\mathbb{Z}$HSs obtained by $(1,n)$ Dehn filling on $m137$
$\mathcal{M}(1,n)$ has no nontrivial $SL_2(\mathbb{R})$ representation and thus no nontrivial $PSL_2(\mathbb{R})$ representation for $n\ll 0$. Since the first Betti number of $\mathcal{M}(1,n)$ is 0, the lift of a trivial $PSL_2(\mathbb{R})$ representation of $\pi_1(\mathcal{M}(1,n))$ into $\widetilde{PSL_2}(\mathbb{R})$ will be trivial. So all representations of $\pi_1(\mathcal{M}(1,n))$ into $\widetilde{PSL_2}(\mathbb{R})$ are trivial for $n\ll 0$, proving Theorem \ref{result} (a).
\end{proof}

In contrast, when $n$ is positive there are examples of non trivial $SL_2(\mathbb{R})$ representations.

Plugging $m=z^{-n}$ into \eqref{Apoly2} and multiplying both sides by $z^{4n-3}$, we get
\begin{equation*}
-A+Bz^{2n-3}+Az^{4n+1}=0.
\end{equation*}
Similarity, define $G(z)=A(z^{4n+1}-1)+Bz^{2n-3}$.
Since $G(1)=-4$, $G(0.8684)>0$, $G(z)$ must have at least one root in $[0.8684, 1)$. So $\pi_1(\mathcal{M}(1,n))$ has at least one nontrivial $SL_2(\mathbb{R})$ representation for any $n>0$. They lift to a $\widetilde{PSL_2}(\mathbb{R})$ representations, since the Euler number of any representation of an integral homology sphere vanishes \cite[Section 6]{Ghys}.

\section{No L-space fillings}
In this section, I will prove Theorem \ref{result} (b) using results from Gillespie's paper \cite{gillespie}, which is based on Rasmussen and Rasmussen's paper \cite{simple}. In fact, I will show that none of the non-longitudinal fillings of $m137$ is an L-space. The homology groups in this section are all homology with integral coefficients.

Suppose $Y$ is a compact connected $3$-manifold with a single torus as boundary. I will follow Gillespie's \cite{simple} notation. Define the set of slopes on $\partial Y$ as:
\begin{displaymath}
\mathcal{S}l(Y)=\{a\in H_1(\partial Y)|\ a\text{ is primitive}\}/\pm 1.
\end{displaymath}
Define the set of L-space filling slopes of $Y$:
\begin{displaymath}
\mathcal{L}(Y)=\{a\in \mathcal{S}l(Y)|\ Y(a) \text{ is an L-space}\}.
\end{displaymath}
Moreover, $Y$ is said to have genus $0$ if $H_2(Y, \partial Y)$ is generated by a surface of genus $0$.

We will use Theorem 1.2 from Gillespie's paper \cite{gillespie} which is stated as:
\begin{theorem}
\label{thm1}
The following are equivalent
\begin{itemize}
\item[1)] $\mathcal{L}(Y)= Sl(Y)-\{l\}$.
\item[2)] $Y$ has genus $0$ and has an L-space filling.
\end{itemize}
\end{theorem}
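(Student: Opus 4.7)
The plan is to combine the Rasmussen--Rasmussen structure theorem for the set of L-space filling slopes $\mathcal{L}(Y)$ with a Mayer--Vietoris computation for longitudinal Dehn filling. Rasmussen--Rasmussen show that for $Y$ with torus boundary admitting at least two L-space fillings, $\mathcal{L}(Y)$ is either a closed sub-arc of $\mathcal{S}l(Y) \cong S^1$ or the complement $\mathcal{S}l(Y) \setminus \{s\}$ of a single ``strange'' slope, with the latter case happening precisely when $Y$ is \emph{Floer simple}. The theorem can then be rephrased as saying that $Y$ is Floer simple with strange slope equal to the homological longitude $l$ if and only if $Y$ has genus $0$ and admits an L-space filling.

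For the direction $(2) \Rightarrow (1)$, assume $Y$ has genus $0$ and at least one L-space filling. The first step is to show $l \notin \mathcal{L}(Y)$. Pick a planar generator $\Sigma$ of $H_2(Y, \partial Y)$; its boundary on $\partial Y$ consists of parallel copies of $l$, and in the filling $Y(l)$ each of these bounds a meridional disk in the attached solid torus. Capping $\Sigma$ off with those disks yields a closed surface in $Y(l)$ giving a nonzero class in $H_2(Y(l))$, and a Mayer--Vietoris computation confirms $b_1(Y(l)) \geq 1$, so $Y(l)$ is not a rational homology sphere and hence not an L-space. This, together with the existence of an L-space filling, places us in one of the two Rasmussen--Rasmussen cases. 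To exclude the closed-arc case, I would feed $\Sigma$ into the Ozsv\'{a}th--Szab\'{o} surgery exact triangle to control $\mathrm{rank}\,\widehat{HF}(Y(a))$ uniformly in $a$, forcing every non-longitudinal filling to be an L-space.

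For the direction $(1) \Rightarrow (2)$, assume $\mathcal{L}(Y) = \mathcal{S}l(Y) \setminus \{l\}$. Then $Y$ certainly has an L-space filling and is Floer simple with strange slope $l$. The plan is to argue by contrapositive for the genus-$0$ conclusion: if every representative of the generator of $H_2(Y,\partial Y)$ had positive genus, then the capping construction of the previous paragraph would fail to detect a homologically essential closed surface in $Y(l)$, and in fact $Y(l)$ would remain a rational homology sphere. In this case, the Turaev-style torsion invariant attached to a Floer simple manifold by Rasmussen--Rasmussen would pick out a distinguished slope different from $l$, contradicting the hypothesis that $l$ is the unique excluded slope. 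Hence some generator of $H_2(Y, \partial Y)$ must be planar.

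The main obstacle I expect is the precise identification of the Rasmussen--Rasmussen strange slope with the homological longitude: the structure theorem yields an abstractly defined exceptional slope, and matching it with $l$ requires a careful comparison between a Floer-theoretic pairing on $H_1(\partial Y)$ and the topological boundary map $H_2(Y,\partial Y) \to H_1(\partial Y)$. The Mayer--Vietoris computation above handles only the easier direction of this identification; the converse, which turns a Floer-simple structure with a given strange slope into a topological genus-$0$ statement, is where genuine input from Rasmussen--Rasmussen and Gillespie beyond formal surgery-exact-triangle arguments is needed.
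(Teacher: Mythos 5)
A preliminary remark: the paper does not prove Theorem \ref{thm1} at all---it quotes it verbatim as Theorem 1.2 of Gillespie \cite{gillespie}, whose proof rests on the structure theory of Rasmussen--Rasmussen \cite{simple}---so your proposal can only be measured against that cited argument. Your treatment of the easy piece is correct: capping a generator of $H_2(Y,\partial Y)$ with meridian disks shows $b_1(Y(l))\geq 1$, so $Y(l)$ is never an L-space and $l\notin\mathcal{L}(Y)$. But note this has nothing to do with genus: the homological longitude is by definition the slope that dies in $H_1(Y;\mathbb{Q})$, equivalently the boundary slope of the generator of $H_2(Y,\partial Y)$, so the capping works for a generator of any genus (``half lives, half dies''). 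This exposes the central error in your $(1)\Rightarrow(2)$ direction: you claim that if every generator has positive genus then $Y(l)$ ``would remain a rational homology sphere,'' and that is false---$b_1(Y(l))\geq 1$ always. Genus is a Thurston-norm quantity invisible to homology, so no Mayer--Vietoris or capping argument can ever produce the genus-$0$ conclusion. The genuine input, which your sketch gestures at but routes through the wrong mechanism, is Floer-theoretic: Rasmussen--Rasmussen show that for a Floer simple manifold the Turaev torsion is determined by $\widehat{HF}$, that the endpoints of the L-space interval $\mathcal{L}(Y)$ are read off from the support of this torsion, and that the torsion detects the Thurston norm of the generator of $H_2(Y,\partial Y)$. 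The equivalence of the theorem is exactly the translation ``interval fills all of $\mathcal{S}l(Y)\setminus\{l\}$ $\Leftrightarrow$ torsion support has no gaps $\Leftrightarrow$ generator has genus $0$,'' and without the norm-detection theorem neither direction closes.

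Your $(2)\Rightarrow(1)$ direction has a parallel gap: ``feed $\Sigma$ into the surgery exact triangle to control $\mathrm{rank}\,\widehat{HF}(Y(a))$ uniformly in $a$'' is a placeholder, not an argument. The exact triangle compares fillings along a single distance-one triple of slopes; by itself it cannot promote one L-space filling plus a planar surface to the statement that \emph{every} non-longitudinal slope is an L-space slope. Moreover the Rasmussen--Rasmussen dichotomy you invoke requires more than one L-space filling (or Floer simplicity) before it applies, and you do not explain how to get from a single L-space filling to that hypothesis---this is precisely one of the points Gillespie has to handle. Finally, the ``main obstacle'' you flag at the end, identifying the exceptional slope of the structure theorem with $l$, is in fact the trivial part: since $b_1(Y(l))\geq 1$ unconditionally, $l$ is automatically excluded from $\mathcal{L}(Y)$, so in the complement-of-a-point case the missing slope can only be $l$. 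The hard content lies where your sketch is weakest, namely the torsion/Thurston-norm comparison.
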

%Now go back to our problem.
\begin{proof}[Proof of Theorem \ref{result} (b)]
%Take $Y$ to be the manifold $\mathcal{M}=m137$.
Let $l\in Sl(\mathcal{M})$ be the homological longitude.
In our case $l$ can be taken to be $[\lambda]$.
I will show that none of the $(1,n)$ fillings to $\mathcal{M}$ is an L-space.

I will find one non L-space filling first. Snappy \cite{SnapPy} shows that $(1,-1)$ filling on the knot $8_{20}$ complement with homological framing is homeomorphic to $m 011(2,3)$, which is also homeomorphic to $\mathcal{M}(1,-3)$. %m137(1,-1) is not hyperbolic
Ozsv\'{a}th and Szab\'{o} showed that if some $(1, p)$ Dehn filling of a knot complement in $S^3$with homological framing is an L-space, then the Alexander polynomial of the knot has coefficients $\pm 1$ \cite[Corollary 1.3]{lspaceknot}. We can compute with SnapPy \cite{SnapPy} that the Alexander Polynomial of $8_{20}$ is $x^4 - 2x^3 + 3x^2 - 2x + 1$. So $\mathcal{M}(1,-3)$ is not an L-space.  %Zolt$\acute{a}$n
Therefore
\begin{displaymath}
-3l+[\mu]\notin \mathcal{L}(\mathcal{M}) \neq \mathcal{S}l(\mathcal{M})-\{l\}\ni -3l+[\mu] ,
\end{displaymath}
By Theorem \ref{thm1}, either $\mathcal{M}$ has no L-space fillings or $\mathcal{M}$ has positive genus.

The manifold $\mathcal{M}$ can be viewed as the complement of a knot $K$ in $S^2\times S^1$ \cite{Nathan_example}. This knot $K$ intersects each $S^2$ three times. So $[K]\neq 0$ in $H_1(S^2\times S^1;\mathbb{Z})$. It follows that $H_2(\mathcal{M},\partial \mathcal{M})$ is generated by genus $0$ surface $(S^2\times \{P\})\cap \mathcal{M}$ for generic point $P$ on $K$. So $\mathcal{M}$ has genus $0$, which forces $\mathcal{M}$ to have no L-space filling. Therefore none of the integral homology spheres $\mathcal{M}(1,n)$ is an L-space.
\end{proof}

\bibliography{m137}
%\bibliography{Non_L_space}
\bibliographystyle{plain}
\end{document}